\documentclass[11pt]{article}

\usepackage{amsmath, amssymb, amsthm, graphicx, tikz-cd,xcolor,enumitem,natbib,hyperref}
\usepackage[margin=1in]{geometry}

\newcommand{\bZ}{\mathbb{Z}}

\newcommand\set[1]{\left\{ #1 \right\}}
\newcommand\abs[1]{\left\lvert #1 \right\rvert}
\newcommand\pars[1]{\left( #1 \right)}

\newcommand\mmod[1]{\ (\mathrm{mod}\ #1)}

\theoremstyle{definition}
\newtheorem{defn}{Definition}

\theoremstyle{plain}
\newtheorem{thm}{Theorem}

\newtheorem{lem}{Lemma}

\newtheorem{cor}{Corollary}

\title{A linear upper bound on the $\bZ_p$-Ramsey number of graphs with sufficiently large $2$-packing}
\author{Emily Heath \footnote{California State Polytechnic University Pomona, \texttt{eheath@cpp.edu}.} 
\and 
Andrew Simmons \footnote{California State Polytechnic University Pomona, \texttt{simmons1@cpp.edu}.}}
\date{\today}
\begin{document}
\maketitle

\begin{abstract}
Given a positive integer $k$ and graph $G$,  the $\mathbb{Z}_k$-Ramsey number $R(G,\bZ_k)$ is the least $N$ (if it exists) such that every coloring $f:E(K_N)\rightarrow \mathbb{Z}_k$ contains a copy $G'$ of $G$ such that $\sum_{e\in E(G')}f(e)=0$. 
Motivated by a question of Caro and Mifsud, we study the $\mathbb{Z}_k$-Ramsey number of graphs with a sufficiently large 2-packing, i.e. a set of vertices $S\subseteq V(G)$ such that $N[u]\cap N[v]=\emptyset$ for all distinct $u,v\in S$. In particular, we prove that $R(G,\mathbb{Z}_p)\leq n+6p-9$ for all $n$-vertex graphs $G$ and all primes $p$ such that $p$ divides $e(G)$, the minimum degree of $G$ is at least $1$, and there exists a $2$-packing of $G$ with size $p-1$. 
This upper bound improves depending on vertex degrees in the $2$-packing, with equality in certain cases. The result also implies an upper bound of the form $R(G,\mathbb{Z}_p)\leq n+C$ for $n$-vertex graphs $G$ of bounded maximum degree.

\end{abstract}

\section{Introduction}
The Erd\H{o}s-Ginzburg-Ziv theorem states that every sequence of $2k-1$ integers contains a subsequence of $k$ integers whose sum is congruent to $0$ modulo $k$. Bialostocki and Dierker~\cite{bialostocki1990zero} initiated the study of a graph-theoretic analogue of the theorem, establishing some of the first results in zero-sum Ramsey theory. While classical Ramsey theory involves coloring combinatorial structures and investigating the appearance of monochromatic substructures, zero-sum Ramsey theory involves interchanging colors with elements from a group (often $\bZ_k$) and instead focusing on zero-sum substructures.

More precisely, given a graph $G$ and positive integer $k$, the \textit{Ramsey number} $R(G,k)$ is the least $N$ for which every \textit{$k$-coloring} $f:E(K_N)\to \set{1,\ldots,k}$ contains a monochromatic copy of $G$; its existence is guaranteed by Ramsey's Theorem \cite{ramsey1987problem}. If $\Gamma$ is a finite abelian group, then the \textit{zero-sum Ramsey number} $R(G,\Gamma)$ is the least $N$ (if it exists) for which every \textit{$\Gamma$-coloring} $f:E(K_N)\to \Gamma$ contains a \textit{zero-sum copy} of $G$, i.e. a copy $G'$ of $G$ such that $\sum_{e\in E(G')}f(e)=0_\Gamma$. The \textit{exponent} of any finite group $\Gamma$, denoted $\exp(\Gamma)$, is the smallest positive integer $r$ such that $rg=0$ for all $g\in \Gamma$. Note that $R(G,\Gamma)$ exists if and only if $\exp(\Gamma)$ divides $e(G)$. If the divisibility condition holds, then monochromatic copies of $G$ are zero-sum, and therefore $R(G,\Gamma)\leq R\pars{G,\abs{\Gamma}}$. If the divisibility condition does not hold, then for all positive integers $N$, there exists a constant $\Gamma$-coloring of $K_N$ that fails to contain a zero-sum copy of $G$. In our work, we restrict our focus to the \textit{$\bZ_k$-Ramsey number} $R(G,\bZ_k)$, which exists if and only if $\exp(\bZ_k)=k$ divides $e(G)$.

Few exact values for the $\bZ_k$-Ramsey number are known, even for small values of $k$. For example, the $\bZ_2$-Ramsey numbers were completely determined in all cases by Caro \cite{CARO1994205}, but there are still cases that remain open for $k=3$. Recently, Alvarado, Colucci, and Parente \cite{alvarado2025problemcaromathbbz3ramseynumber} made significant progress by completely determining the $\bZ_3$-Ramsey number of forests. 

%In another direction, work has been focused on proving upper bounds for nice families of graphs but more general groups. In particular, 

However, it is known that the zero-sum Ramsey number is linear for a large family of graphs. In particular, Katz, Lian, Malekshahian, and Shapiro \cite{katz2025linearupperboundzerosum} showed that a linear upper bound of the form $R(G,\Gamma)\leq Cn$ holds for all $n$-vertex graphs of bounded maximum degree and finite abelian groups $\Gamma$ such that $\abs{\Gamma}$ divides $e(G)$.

In many cases for the $\bZ_k$-Ramsey number, the upper bound can be improved to the form  $R(G,\bZ_k)\leq n+C$. 
For example, Colucci and D'Emidio \cite{colucci2026linearupperboundzerosum} proved that $R(F,\bZ_p)\leq n+9p-12$ for all primes $p$ and all $n$-vertex forests $F$ with $p$ dividing $e(F)$, minimum degree at least $1$, and $n\geq 3p^2-12p+11$. Shapiro \cite{Shapiro} then generalized this result to $d$-degenerate graphs and gave a slight improvement in the case of forests (which are 1-degenerate graphs). A \textit{$d$-degenerate} graph is a graph in which every subgraph has at least one vertex of degree at most $d$. Shapiro \cite{Shapiro} showed that $R(G,\bZ_p)\leq n+(3+3d)p$ for all $d$-degenerate $n$-vertex graphs $G$ and all primes $p$ dividing $e(G)$ such that $2d<p\leq e(G)/(2d(d+1)^2)$. 

We now discuss our main result. Caro and Mifsud \cite{mod3} asked whether $R(G,\bZ_k)\leq n+k-1$ for all $n$-vertex graphs $G$ containing at least $k-1$ vertices of degree $1$ with pairwise distances of at least $3$. They proved this result for the case $k=3$. We answer their question in the affirmative for all primes and in fact prove a much more general result for graphs containing a $2$-packing of size $k-1$.  
A \textit{$2$-packing} of a graph $G$ is a subset $S\subseteq V(G)$ such that $N[u]\cap N[v]=\emptyset$ for all distinct $u,v\in S$, where $N[u]$ denotes the closed neighborhood $N(u)\cup\{u\}$.
\begin{thm}
    Let $p$ be prime. Suppose $G$ is an $n$-vertex graph such that $p$ divides $e(G)$, the minimum degree of $G$ is at least $1$, and there exists a $2$-packing $S$ of $G$ with $\abs{S}=p-1$. Let
    \begin{equation*}
        s_0=\abs{\set{v\in S:\deg(v)\equiv 0\mmod{p}}},
    \end{equation*}
    and
    \begin{equation*}
        c=\begin{cases}
            0 & \mbox{if $V(G)\neq \bigcup_{v\in S}N[v]$ or $\deg(v)=1$ for all $v\in S$,}\\
            1 & \mbox{otherwise.}\\
        \end{cases}
    \end{equation*}
    Then
    \begin{equation*}
        R(G,\bZ_p)\leq \begin{cases}
            n+p-1+c & \mbox{if $s_0=0$,}\\
            n+5p-9+s_0+c & \mbox{if $s_0\neq 0$.}\\
        \end{cases}
    \end{equation*}
    Moreover, equality holds in the case that $s_0=c=0$ if there exists an integer $d\not\equiv 0\mmod{p}$ such that $\deg(v)\equiv d\mmod{p}$ for all $v\in V(G)$.
    \label{thm:main}
\end{thm}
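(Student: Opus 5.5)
We will prove the bound by an embedding-and-switching argument. The idea is to fix an arbitrary embedding of $G-S$, then use the freedom in where to place the $p-1$ packing vertices to adjust the total sum, with Cauchy--Davenport supplying the adjustment. Fix $N$ equal to the claimed bound and a coloring $f:E(K_N)\to\bZ_p$.

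\textbf{Step 1 (the reservoir).} Embed $H=G-S$ arbitrarily into $K_N$. This uses $n-(p-1)$ vertices, leaving a reservoir $W$ of $N-n+p-1$ unused vertices. For each $v\in S$ with image set $A_v$ of $N(v)$ (these are pairwise disjoint, since $S$ is a $2$-packing), define the weight of $w\in W$ as
\[
\sigma_v(w)=\sum_{a\in A_v}f(wa).
\]
Placing the vertices of $S$ at distinct reservoir vertices $w_v$ gives a copy of $G$ with total sum
\[
f(H)+\sum_{v}\sigma_v(w_v).
\]
So we must choose distinct $w_v$ such that $\sum_v \sigma_v(w_v)\equiv -f(H)$.

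\textbf{Step 2 (Cauchy--Davenport).} Let $\Sigma_v=\{\sigma_v(w):w\in W\}$. If each $\Sigma_v$ has at least $2$ elements, then the $(p-1)$-fold sumset $\Sigma_1+\cdots+\Sigma_{p-1}$ is all of $\bZ_p$ by Cauchy--Davenport. The distinctness requirement on the $w_v$ has to be handled separately. We will use a version for distinct representatives, such as the Dias da Silva--Hamidoune or Alon--Nathanson--Ruzsa polynomial method. Alternatively, we can assign the $w_v$ greedily and use the surplus reservoir vertices so that each $v$ still sees two values outside the already used vertices. The count $N-n+p-1\ge 2(p-1)$ when $s_0=0$ is where the $p-1$ slack is meant to come from.

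\textbf{Step 3 (degenerate classes).} The obstacle is a $v$ for which $\sigma_v$ is constant on $W$. We handle this by a switching lemma: if $\sigma_v$ is constant, swap a reservoir vertex $w$ with a vertex $a\in A_v$, or with some vertex of $H$ not in $\bigcup N[v]$. One checks that either some swap produces a second value, or the coloring is highly structured, for example $f(wa)$ independent of $w$ on the relevant edges.

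\begin{itemize}[label={}]
\item \emph{Case $\deg(v)\not\equiv 0$.} A structured coloring forces constant weights whose changes under swapping are multiples of $\deg(v)$. This lets us still move the sum.
\item \emph{Case $\deg(v)\equiv 0 \mmod p$.} Swapping does not change the sum, which is why each such vertex costs extra room. This accounts for the $+s_0$ term and the additional $4p-8$ reservoir vertices needed to re-embed $H$ so as to break constancy, for instance by choosing a different image of $H$ among $\approx 5p$ extra vertices.
\item \emph{The term $c$.} When $V(G)=\bigcup N[v]$ and some $\deg(v)\ge2$, there is no outside vertex of $H$ to swap with, and one extra vertex is needed.
\end{itemize}

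I expect this structural case analysis, showing that a constant $\sigma_v$ for all placements forces a contradiction or an alternative zero-sum copy, to be the main difficulty.

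\textbf{Step 4 (sharpness).} For equality when $s_0=c=0$ and all degrees are $\equiv d\not\equiv0$, color $K_{n+p-2}$ by splitting the vertex set into a part $X$ of size $p-1$ and the rest. Then $\sum_e f(e)$ over a copy equals a function of how many vertices of the copy lie in $X$, weighted by $d$. For example, give every edge incident to a designated vertex set color $1$ and all others $0$. The sum of a copy is then $d$ times the number of copy vertices in that set, adjusted by edges inside it. Since $|G|=n$ and $N=n+p-2$, every copy must contain between $1$ and $p-1$ vertices of a set of size $p-1$ (choosing $X$ of size $p-1$ appropriately), so the sum is $dj$ plus a correction, which is nonzero mod $p$. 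The exact coloring will be tuned, likely a single-vertex star-type coloring generalized to $p-1$ vertices with edges inside $X$ colored to cancel the double counting.
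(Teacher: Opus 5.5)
There is a genuine gap. Steps 2--3 carry the entire proof, and as stated neither can be completed.

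\textbf{Step 2 (distinctness).} Knowing $\abs{\Sigma_v}\ge 2$ for each $v$ does not survive the requirement that the $w_v$ be distinct. Suppose each $A_v$ has exactly one edge to a fixed $w^*\in W$ colored $\overline{1}$, and every other edge between $W$ and $\bigcup_v A_v$ is colored $\overline{0}$. Then each $\Sigma_v=\set{\overline{0},\overline{1}}$, but at most one $v$ can sit at $w^*$, so only the totals $f(H)$ and $f(H)+\overline{1}$ are reachable. For $p\ge 3$ that is not all of $\bZ_p$. So your greedy scheme fails here. Dias da Silva--Hamidoune does not rescue it either: it concerns distinct elements of a single set, not distinct indices for different functions $\sigma_v$. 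What Cauchy--Davenport needs is pairwise disjoint pairs carrying distinct values, i.e.\ disjoint switches, and a fixed embedding of $G-S$ does not supply them.

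\textbf{Step 3 (the structured case).} This is the more serious problem. You treat the structured case as something to rule out, but it is a genuine case that no switching can touch. Take classes $X,Y$ with edges inside $X$ colored $\overline{2}$, inside $Y$ colored $\overline{0}$, and across colored $\overline{1}$. For every embedding of $G-S$ and every $v\in S$ with $\deg(v)\equiv 0\mmod{p}$, one gets $\sigma_v(w)\equiv\abs{A_v\cap X}$ for all $w\notin A_v$. So no re-embedding ever breaks constancy. The sum of a copy $G'$ is $\sum_{a\in V(G')\cap X}\deg(a)$, and producing a zero-sum copy means deciding which vertices of $G$ land in $X$. That is a zero-sum subsequence problem, and your outline has no tool for it. Similarly, under a constant coloring every $\sigma_v$ is constant; there you win only because $p\mid e(G)$, a fact your outline never uses.

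\textbf{What the paper supplies.}
\begin{enumerate}[label=(\roman*)]
\item Lemma~\ref{lem:p3s} gives either a monochromatic (hence zero-sum) copy of $G$, or $p-1$ disjoint non-monochromatic paths $a_ib_ic_i$.
\item For $\deg(v_i)\not\equiv 0$, each path yields a switch $(\set{a_i,c_i},T_i)$, with $T_i$ chosen adaptively from $U_i\cup\set{u_0,b_i}$. Your swapping idea is the germ of this floating vertex $u_0$. The averaging argument showing that some choice works is exactly where $\deg(v_i)\not\equiv 0$ enters.
\item For $\deg(v_i)\equiv 0$, the paper instead uses a ``good'' $4$-cycle, one with $f(a_ib_i)-f(b_ic_i)\neq f(a_id_i)-f(d_ic_i)$.
\item Lemma~\ref{lem:embed} then applies Theorem~\ref{thm:CD} to pairwise disjoint switches, so distinctness is automatic.
\item If fewer than $s_0$ disjoint good cycles exist, a clique on at least $n+2p-2$ vertices with no good cycle remains. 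For odd $p$, this forces monochromatic classes with cross colors $\overline{2}^{-1}(c_i+c_j)$. Lemma~\ref{lem:cliquecoloringsum} then writes every copy's sum as $\overline{2}^{-1}\sum_i c_i\sum_{a\in V(G')\cap V_i}\deg(a)$, and Grynkiewicz's weighted Erd\H{o}s--Ginzburg--Ziv theorem (Theorem~\ref{thm:weightedEGZ}) finishes. The case $p=2$ is handled by a cut-parity argument.
\end{enumerate}
This structured case is what the $5p-9+s_0$ pays for. Your ``$4p-8$ extra reservoir vertices'' is reverse-engineered from the bound rather than derived.

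\textbf{Step 4.} Once you color the edges inside $X$ with $\overline{2}$, your construction is exactly the paper's extremal coloring, and that part is fine.
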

Note that in all cases of the hypothesis for Theorem \ref{thm:main}, we have $s_0+c\leq p$. Therefore the upper bound can be stated more simply as $R(G,\bZ_p)\leq n+6p-9$ for all $n$-vertex graphs $G$ such that $p$ divides $e(G)$, the minimum degree of $G$ is at least $1$, and there exists a $2$-packing of $G$ with size $p-1$. 

Moreover, we obtain an improved linear upper bound for the $\bZ_p$-Ramsey number of graphs of bounded maximum degree when $p$ is prime. Indeed, if $\Delta$ is a positive integer, then any $n$-vertex graph $G$ with maximum degree at most $\Delta$ contains a $2$-packing of size $\left\lfloor\frac{n}{1+\Delta+\Delta(\Delta-1)}\right\rfloor=\left\lfloor\frac{n}{\Delta^2+1}\right\rfloor$. Therefore Theorem \ref{thm:main} applies for such $G$ when $p$ divides $e(G)$, the minimum degree of $G$ is at least $1$, and $n\geq (p-1)(\Delta^2+1)$. Lastly, note the condition on minimum degree can now be dropped since $R(G\cup K_1,\Gamma)\leq R(G,\Gamma)+1$ for any graph $G$ and finite abelian group $\Gamma$ such that $\abs{\Gamma}$ divides $e(G)$.
\begin{cor}
    Suppose $p$ is prime and $\Delta$ is a positive integer. Then there exists a constant $C=C(p,\Delta)$ for which
    \begin{equation*}
        R(G,\bZ_p)\leq n+C
    \end{equation*}
    for all $n$-vertex graphs $G$ such that $p$ divides $e(G)$ and the maximum degree of $G$ is at most $\Delta$. In particular, it suffices to choose $C(p,\Delta)=\max\set{R(K_{(p-1)(\Delta^2+1)},p),\ 6p-9}$.
\end{cor}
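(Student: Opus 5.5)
The corollary to prove is the last statement (Corollary). Plan: split according to $n$. Set $m=(p-1)(\Delta^2+1)$ and split into the cases $n\ge m$ and $n<m$.

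\emph{Large $n$.} Suppose $n\geq m$. First reduce to minimum degree at least $1$. Let $G_0$ be $G$ with its isolated vertices deleted, say $t$ of them, so $G_0$ has $n_0=n-t$ vertices and $e(G_0)=e(G)$.

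If $n_0\geq m$, greedily build a $2$-packing in $G_0$. Pick a vertex, delete all vertices within distance $2$ of it (at most $1+\Delta+\Delta(\Delta-1)=\Delta^2+1$ vertices), and repeat. This yields a $2$-packing of size $\lfloor n_0/(\Delta^2+1)\rfloor\geq p-1$, and we take any $p-1$ of its vertices. Theorem \ref{thm:main} then gives $R(G_0,\bZ_p)\leq n_0+6p-9$, since $s_0+c\leq p$. Applying $R(H\cup K_1,\bZ_p)\leq R(H,\bZ_p)+1$ a total of $t$ times gives $R(G,\bZ_p)\leq n+6p-9$.

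The isolated-vertex inequality holds for the following reason. In a coloring of $K_{R(H)+1}$, remove one vertex, find a zero-sum copy of $H$ in the rest, and use the removed vertex as the isolated vertex; adding it contributes no edges.

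If instead $n_0<m$, treat $G_0$ as in the small case below and add back the isolated vertices as before.

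\emph{Small $n$.} Suppose the graph in question (either $G$, or $G_0$ in the subcase above) has fewer than $m$ vertices. Then it is a subgraph of $K_m$.

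Any $\bZ_p$-coloring of $K_N$ with $N\geq R(K_m,p)$ is in particular a $p$-coloring. So it contains a monochromatic $K_m$, and hence a monochromatic copy of the graph. Since $p$ divides the number of edges, a monochromatic copy with all edges colored $a$ has sum $e\cdot a\equiv 0\pmod p$, so the copy is zero-sum. Thus $R(G,\bZ_p)\leq R(K_m,p)\leq n+C$.

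In the mixed subcase, $R(G_0,\bZ_p)\leq R(K_m,p)$, and adding the $t$ isolated vertices gives $R(G,\bZ_p)\leq R(K_m,p)+t\leq n+C$.

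Combining the cases gives the claimed $C=\max\{R(K_m,p),6p-9\}$. The only subtle step is the isolated-vertex bookkeeping, which is what allows the minimum degree hypothesis of Theorem \ref{thm:main} to be dropped. Everything else is a direct citation.
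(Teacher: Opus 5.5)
Your proof is correct and follows the paper's route. The paper also builds a greedy $2$-packing of size $\lfloor n/(\Delta^2+1)\rfloor$ to invoke Theorem \ref{thm:main}, using $s_0+c\leq p$ to get $n+6p-9$ when $n\geq (p-1)(\Delta^2+1)$; it uses the trivial bound $R(G,\bZ_p)\leq R(K_{(p-1)(\Delta^2+1)},p)$ for smaller $n$, and drops the minimum-degree hypothesis via $R(G\cup K_1,\bZ_p)\leq R(G,\bZ_p)+1$. Your explicit treatment of the subcase where deleting isolated vertices leaves fewer than $(p-1)(\Delta^2+1)$ vertices just spells out a step the paper leaves implicit.
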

The family of graphs for which Theorem \ref{thm:main} applies has substantial overlap with yet another well known family of graphs. Indeed, the equality case in the theorem determines the exact $\bZ_p$-Ramsey number of most cycles when $p$ is prime.
\begin{cor}
    If $p$ is prime and $n\geq 3p$ such that $p$ divides $n$, then $R(C_n,\bZ_p)=n+p-1$.
\end{cor}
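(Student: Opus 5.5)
The plan is to obtain the corollary directly from Theorem~\ref{thm:main}, including its equality clause, by checking each hypothesis for $G=C_n$. Label the cycle $v_0v_1\cdots v_{n-1}v_0$. The conditions $p\mid e(C_n)=n$ and minimum degree $2\geq 1$ hold immediately.

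For the $2$-packing, take $S=\set{v_0,v_3,v_6,\ldots,v_{3(p-2)}}$, so $\abs{S}=p-1$. The closed neighbourhoods $N[v_{3i}]=\set{v_{3i-1},v_{3i},v_{3i+1}}$ are pairwise disjoint as long as the last one does not wrap around to meet $N[v_0]$. That needs $3(p-2)+1<n-1$, which follows from $n\geq 3p$. Moreover,
\begin{equation*}
\Bigl|\bigcup_{v\in S}N[v]\Bigr|=3(p-1)<n,
\end{equation*}
so $V(C_n)\neq\bigcup_{v\in S}N[v]$ and hence $c=0$.

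Next, every vertex has degree $2$. For $p$ odd we have $2\not\equiv 0\mmod{p}$, so $s_0=0$. The theorem then gives $R(C_n,\bZ_p)\leq n+p-1$. Since $s_0=c=0$ and all degrees are congruent to $d=2\not\equiv 0\mmod{p}$, the equality clause of Theorem~\ref{thm:main} upgrades this to $R(C_n,\bZ_p)=n+p-1$.

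The main obstacle is the prime $p=2$. Here $\deg(v)=2\equiv 0\mmod 2$, so $s_0=p-1=1$ and the equality clause does not apply. The theorem only yields $R(C_n,\bZ_2)\leq n+2$, which does not match the claimed value. I would therefore treat $p=2$ separately using Caro's complete determination of the $\bZ_2$-Ramsey numbers~\cite{CARO1994205}. My recollection is that for an $n$-vertex graph with an even number of edges that is not complete, not a union of two cliques, and not having all degrees odd, that theorem gives the value $n$. If so, $C_n$ with $n$ even falls in this generic case, which would give $R(C_n,\bZ_2)=n$ rather than $n+1$. Before finalizing, I would check Caro's theorem against this case. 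If the value $n$ is confirmed, the corollary should be stated for odd primes $p$, with the $p=2$ case referred to~\cite{CARO1994205}.
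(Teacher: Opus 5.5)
For odd $p$ your argument is the paper's intended derivation. The paper gives no separate proof beyond the remark that the equality case of Theorem~\ref{thm:main} applies. Your packing $\set{v_0,v_3,\ldots,v_{3(p-2)}}$ covers only $3(p-1)<n$ vertices, so $c=0$; together with $s_0=0$ and $d=2\not\equiv 0\mmod{p}$, this supplies exactly what that remark needs.

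Your concern about $p=2$ is justified, and it is an error in the statement rather than a gap in your proposal. There $s_0=1$, so the theorem only gives $n+2$. The paper's $p=2$ extremal coloring (a star in color $\overline{1}$) needs all degrees odd, and fails for cycles since every Hamiltonian cycle uses exactly two star edges. In fact $R(C_n,\bZ_2)=n$ for every even $n\geq 4$, as Caro's characterization gives, since $C_n$ is not complete, not a union of two cliques, and not odd-regular.

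You can also check this directly. Every $4$-cycle $abcd$ of $K_n$ is the symmetric difference of two Hamiltonian cycles: exchange $ab,cd$ for $bc,da$. So if every Hamiltonian cycle had odd sum, every $4$-cycle would have even sum. The $4$-cycles span the even-size elements of the cycle space, whose annihilator consists of the colorings $f(uv)=g(u)+g(v)+\epsilon$. Every Hamiltonian cycle would then have sum $n\epsilon=0$ (as $n$ is even), a contradiction. So the corollary should be stated for odd primes only, as you propose.
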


During completion of this manuscript, we learned that Chi and He \cite{CH} independently obtained this same result for $R(C_n,\bZ_k)$ for all odd $k$ and $n\geq 35k^2$. More generally, they proved that $R(C_n,\bZ_k)\leq \max\{R(C_{2k},\bZ_k),n+k-1\}$ for all $k$. They also determined for wheel graphs $W_n=C_n+K_1$ that $R(W_{3k},\bZ_3)=3k+1$; note that Theorem \ref{thm:main} does not apply to such graphs. Lastly, they showed that $R(C_6,\bZ_3)=8$. It may be of interest that a modification of our arguments gives an alternative proof of this fact.

\section{Preliminaries}

We will use a generalized version of the Cauchy-Davenport theorem. 
\begin{thm}
    Let $A_1,\ldots,A_l$ be nonempty subsets of $\bZ_p$ for $p$ prime. Then
    \begin{equation*}
        \abs{A_1+\cdots+A_l}\geq \min\set{\sum_{i=1}^{l}\abs{A_i}-l+1,\ p}.
    \end{equation*}
    \label{thm:CD}
\end{thm}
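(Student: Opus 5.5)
The plan is to reduce to the classical two-set Cauchy--Davenport inequality $\abs{A+B}\geq\min\set{\abs{A}+\abs{B}-1,\ p}$ for nonempty $A,B\subseteq\bZ_p$, and then induct on $l$.

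\textbf{Two-set case.} I will prove it by induction on $\abs{B}$, using the Dyson $e$-transform.
If $\abs{B}=1$, then $A+B$ is a translate of $A$, so $\abs{A+B}=\abs{A}$ and there is nothing to prove.
If $A+B=\bZ_p$, the bound holds trivially. So assume $\abs{B}\geq 2$ and $\abs{A+B}<p$; in particular $A\neq\bZ_p$.

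First I find a suitable shift $e$. Pick distinct $b_1,b_2\in B$ and set $d=b_2-b_1\neq 0$.
\begin{itemize}
\item Since $p$ is prime, $d$ generates $\bZ_p$.
\item If $a+d\in A$ for every $a\in A$, then $A$ would be closed under adding $d$, forcing $A=\bZ_p$, a contradiction.
\item So there is $a\in A$ with $a+d\notin A$. Put $e=a-b_1$. Then $b_1+e=a\in A$ while $b_2+e=a+d\notin A$.
\end{itemize}

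Next, define
\begin{equation*}
A'=A\cup(B+e),\qquad B'=B\cap(A-e).
\end{equation*}
This pair has the following properties.
\begin{itemize}
\item $B'$ is nonempty, since $b_1\in B'$.
\item $B'$ is a proper subset of $B$, since $b_2\notin B'$.
\item $\abs{A'}+\abs{B'}=\abs{A}+\abs{B}$. This follows by inclusion--exclusion, because $(B+e)\setminus A$ is in bijection with $B\setminus B'$.
\item $A'+B'\subseteq A+B$. Indeed, $A+B'\subseteq A+B$. Also, if $b\in B$ and $b'\in B'$, then $b'+e\in A$, so $(b+e)+b'=(b'+e)+b\in A+B$.
\end{itemize}
The induction hypothesis applied to $(A',B')$ then gives
\begin{equation*}
\abs{A+B}\geq\abs{A'+B'}\geq\min\set{\abs{A'}+\abs{B'}-1,\ p}=\min\set{\abs{A}+\abs{B}-1,\ p}.
\end{equation*}

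\textbf{General case.} I induct on $l$; the case $l=1$ is trivial.
Write $T=A_1+\cdots+A_{l-1}$ and $\Sigma=\sum_{i=1}^{l-1}\abs{A_i}$. By induction, $\abs{T}\geq\min\set{\Sigma-l+2,\ p}$. The two-set case then gives
\begin{equation*}
\abs{T+A_l}\geq\min\set{\abs{T}+\abs{A_l}-1,\ p}\geq\min\set{\Sigma+\abs{A_l}-l+1,\ p}.
\end{equation*}
For the last step, there are two cases.
\begin{itemize}
\item If $\abs{T}\geq p$, we use $\abs{A_l}\geq 1$, so $\abs{T}+\abs{A_l}-1\geq p$.
\item Otherwise $\abs{T}\geq\Sigma-l+2$, and we substitute directly.
\end{itemize}

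The only genuinely delicate step is choosing $e$ so that $B'$ is both nonempty and a proper subset of $B$. This is exactly where primality of $p$ enters: the nonzero difference $b_2-b_1$ generates all of $\bZ_p$, so no proper nonempty $A$ can be invariant under it. The rest is bookkeeping.
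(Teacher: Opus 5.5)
Your proof is correct: the $e$-transform step is sound, since $e=a-b_1$ gives $b_1\in B'$ and $b_2\notin B'$, and both $\abs{A'}+\abs{B'}=\abs{A}+\abs{B}$ and $A'+B'\subseteq A+B$ check out, and the induction on $l$ handles the truncation at $p$ correctly. The paper gives no proof of this statement; it simply quotes it as the standard generalized Cauchy--Davenport theorem, and your argument (the two-set case via Dyson's $e$-transform, then induction on the number of summands) is the classical derivation of exactly that result.
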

In addition, we will use a weighted analogue of the Erd\H{o}s--Ginzburg--Ziv theorem proved by Grynkiewicz \cite{grynkiewicz}.
\begin{thm}
    Let $n,k\geq 2$ be positive integers. Suppose $a_1,\ldots,a_n$ is a sequence of integers such that $\sum_{i=1}^{n}a_i\equiv 0\mmod{k}$ and suppose $b_1,\ldots,b_{n+k-1}$ is another sequence of integers. Then there exists a rearranged subsequence $b_{j_1},\ldots,b_{j_n}$ such that $\sum_{i=1}^{n}b_{j_i}a_i\equiv 0\mmod{k}$.
    \label{thm:weightedEGZ}
\end{thm}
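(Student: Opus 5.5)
We propose to prove Theorem~\ref{thm:weightedEGZ} in two stages: first the case $k=p$ prime, using Theorem~\ref{thm:CD}, and then general $k$ by induction on the number of prime factors of $k$, via a multiplicativity step. Throughout we work modulo $k$ and treat $b_1,\ldots,b_{n+k-1}$ as a multiset $B$. The reformulation we aim for is this: find pairwise disjoint subsets $B_1,\ldots,B_n$ of $B$, each consisting of \emph{distinct} residues, such that $0\in a_1B_1+\cdots+a_nB_n$. Choosing one element from each $B_i$ then yields a rearranged subsequence of length $n$, since the $B_i$ are disjoint as submultisets, and gives the desired weighted sum.

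\textbf{Prime case.} Suppose first that some residue $b$ occurs at least $n$ times in $B$. Taking it $n$ times gives $b\sum a_i\equiv 0$, and we are done. So assume every residue has multiplicity at most $n-1$. Let $I=\set{i: a_i\not\equiv 0}$ and $m=\abs{I}$. If $m=0$, any choice works. Otherwise we put the $n-m$ indices with $a_i\equiv 0$ in correspondence with singletons $B_i$, and we choose those singletons greedily from the residues of highest multiplicity. We then distribute the remaining $m+p-1$ elements into $m$ sets $B_i$ ($i\in I$), each with distinct entries. By the standard set-partition criterion, such a distribution exists exactly when no residue occurs more than $m$ times among the remaining elements. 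The greedy choice, together with the bound that every multiplicity is at most $n-1$, is what we expect to guarantee this; if it fails, then some residue occurs at least $n$ times, which is the case already handled. Since multiplication by the unit $a_i$ is injective, $\abs{a_iB_i}=\abs{B_i}$. Theorem~\ref{thm:CD} then gives
\[
\abs{\textstyle\sum_{i\in I}a_iB_i}\ \geq\ \min\set{(m+p-1)-m+1,\ p}=p,
\]
so the sumset is all of $\bZ_p$. In particular it contains $-\sum_{i\notin I}a_ib_i\equiv 0$, where $b_i$ denotes the single element of $B_i$ for $i\notin I$ (this sum vanishes because $a_i\equiv 0$ for $i\notin I$). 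The delicate point in this stage is the counting argument showing that the greedy assignment of singletons leaves a family of multiplicities that can be partitioned.

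\textbf{Multiplicativity.} Write $k=rs$ and assume the theorem holds for $r$ and for $s$. From the $n+rs-1$ terms $b_j$ we want to extract $n$ terms that are well organized modulo $s$, and then finish modulo $r$. The natural approach, mirroring the classical Erd\H{o}s--Ginzburg--Ziv multiplicativity argument, is to split the index set $\set{1,\ldots,n}$ into blocks. On each block we use the $s$-case to obtain a partial assignment whose weighted sum is divisible by $s$. Dividing these partial sums by $s$ produces new weights, and the $r$-case is then applied to arrange the blocks.

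\textbf{Main obstacle.} We expect the multiplicativity step to be the main obstacle. The hypothesis $\sum a_i\equiv 0$ does not pass cleanly to blocks, and the weights $a_i$ are fixed to positions, so the blocks cannot be permuted freely as in the unweighted proof. Our first attempt would be to strengthen the induction hypothesis to a set-partition statement: for any multiset of size $n+k-1$ in which no residue occurs more than $n$ times, there should be a partition into $n$ distinct-entry sets whose weighted sumset contains $0$, or else is large in the sense of a Kneser-type bound. We would then prove this strengthened statement multiplicatively using Kneser's theorem in place of Theorem~\ref{thm:CD}. This is essentially the route Grynkiewicz takes, and we anticipate that handling the stabilizer (periodic) case of Kneser's theorem will require the most care.
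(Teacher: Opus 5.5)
The paper does not prove this statement. It cites Grynkiewicz, and it only applies the result with $k=p$ an odd prime. Measured against the statement as written (all $k\ge 2$), your argument has a gap in each stage.

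\textbf{Prime case.} You claim that if the greedy step fails, then some residue occurred at least $n$ times. This is false. Take $p=5$, $n=6$, $(a_i)=(1,4,0,0,0,0)$, so $m=2$, and let the $b_j$ be five copies of $0$ and five copies of $1$. Every multiplicity is $5\le n-1$. Yet after removing any $n-m=4$ terms, the remaining $m+p-1=6$ terms take only two values. So one value occurs at least $3>m$ times, and no partition into two distinct-entry sets exists.

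The correct threshold is $m$, not $n$, because only the positions in $I$ affect the sum. If some residue $r$ occurs at least $m$ times, put $r$ at every position of $I$; this gives $r\sum_{i\in I}a_i\equiv 0$. Otherwise every multiplicity is at most $m-1$. Then any $m+p-1$ of the terms split into $m$ nonempty distinct-entry sets, and Theorem~\ref{thm:CD} finishes exactly as you wrote. With this repair the prime case is complete, and it alone suffices for the use in the proof of Theorem~\ref{thm:main}.

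\textbf{Composite case.} This case is the real content of Grynkiewicz's theorem, and the proposal does not prove it. The block scheme cannot run as stated. Invoking the $s$-case on a block $J$ requires $\sum_{i\in J}a_i\equiv 0\mmod{s}$. This need not hold for any proper nonempty $J$; for instance, take $n=2$ with $s\nmid a_1$. The classical Erd\H{o}s--Ginzburg--Ziv multiplicativity argument also depends on the blocks being interchangeable. As you observe yourself, they are not once the weights are attached to positions.

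Your proposed remedy, a Kneser-type set-partition strengthening, is neither stated precisely nor proved. It also faces a difficulty absent from the prime case. If $\gcd(a_i,k)>1$, then $x\mapsto a_ix$ is not injective on $\bZ_k$. So $\abs{a_iB_i}$ may be much smaller than $\abs{B_i}$, and no lower bound on $\abs{a_1B_1+\cdots+a_nB_n}$ follows from the sizes of the $B_i$ alone.

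Calling this ``essentially Grynkiewicz's route'' amounts to citing his theorem, which is what the paper does. As a self-contained proof for all $k\geq 2$, the proposal is incomplete.
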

The following definition allows us to create a correspondence between special subgraphs of a $\bZ_p$-colored ``host" graph and additive sets, so that we may use the Cauchy--Davenport theorem.
\begin{defn}
    Let $p\geq 2$ be prime. Let $H$ be a complete graph and $f:E(H)\to \bZ_p$. Call the pair $(S,T)$ a \textit{switch} if $S,T\subseteq V(H)$ such that $\abs{S}\geq 2$, $\abs{T}\geq 1$, $S\cap T=\emptyset$, and $\abs{\set{\sum_{t\in T}f(st):s\in S}}=\abs{S}$. In other words, the stars with center in $S$ and $T$ as the set of leaves have distinct edge sums. We say that two switches $(S_1,T_1)$ and $(S_2,T_2)$ are \textit{disjoint} if $S_1$, $S_2$, $T_1$, and $T_2$ are pairwise disjoint.
    \label{defn:switch}
\end{defn}

\begin{figure}
    \centering
    \begin{tikzpicture}[scale=0.7, every node/.style={circle, draw, fill=black}, minimum size=1.5mm, inner sep=0pt]
        % outer
        \node (o0) at (0.8,2.9) {};
        \node (o1) at (2.1,2.1) {};
        \node (o2) at (2.9,0.8) {};
        \node (o3) at (2.9,-0.8) {};
        \node (o4) at (2.1,-2.1) {};
        \node (o5) at (0.8,-2.9) {};
        \node (o6) at (-0.8,-2.9) {};
        \node (o7) at (-2.1,-2.1) {};
        \node (o8) at (-2.9,-0.8) {};
        \node (o9) at (-2.9,0.8) {};
        \node (o10) at (-2.1,2.1) {};
        \node (o11) at (-0.8,2.9) {};
        \foreach \i in {0,...,10}{
            \draw (o\i) -- (o\the\numexpr\i+1\relax);
        }
        \draw (o11) -- (o0);
        % inner
        \node (i0) at (0.26,0.96) {};
        \node (i1) at (0.7,0.7) {};
        \node (i2) at (0.96,0.26) {};
        \node (i3) at (0.96,-0.26) {};
        \node (i4) at (0.7,-0.7) {};
        \node (i5) at (0.26,-0.96) {};
        \node (i6) at (-0.26,-0.96) {};
        \node (i7) at (-0.7,-0.7) {};
        \node (i8) at (-0.96,-0.26) {};
        \node (i9) at (-0.96,0.26) {};
        \node (i10) at (-0.7,0.7) {};
        \node (i11) at (-0.26,0.96) {};
        \foreach \i in {0,...,10}{
            \draw (i\i) -- (i\the\numexpr\i+1\relax);
        }
        % middle
        \draw (i11) -- (i0);
        \node (m1) at (1.4,1.4) {};
        \node (m2) at (1.92,0.52) {};
        \draw (m1) -- (m2);
        \node (m3) at (-1.4,1.4) {};
        \node (m4) at (-1.92,0.52) {};
        \draw (m3) -- (m4);
        \node (m5) at (0.52,-1.92) {};
        \node (m6) at (-0.52,-1.92) {};
        \draw (m5) -- (m6);
        % rest
        \foreach \i in {0,...,11}{
            \draw (o\i) -- (i\i);
        }
    \end{tikzpicture}
    \hspace{1cm}
    \begin{tikzpicture}[scale=0.7, every node/.style={circle, draw, fill=black}, minimum size=1.5mm, inner sep=0pt]
        % outer
        \node (o0) at (0.8,2.9) {};
        \node (o1) at (2.1,2.1) {};
        \node (o2) at (2.9,0.8) {};
        \node (o3) at (2.9,-0.8) {};
        \node (o4) at (2.1,-2.1) {};
        \node (o5) at (0.8,-2.9) {};
        \node (o6) at (-0.8,-2.9) {};
        \node (o7) at (-2.1,-2.1) {};
        \node (o8) at (-2.9,-0.8) {};
        \node (o9) at (-2.9,0.8) {};
        \node (o10) at (-2.1,2.1) {};
        \node (o11) at (-0.8,2.9) {};
        \foreach \i in {0,...,10}{
            \draw[gray] (o\i) -- (o\the\numexpr\i+1\relax);
        }
        \draw[gray] (o11) -- (o0);
        % inner
        \node (i0) at (0.26,0.96) {};
        \node (i1) at (0.7,0.7) {};
        \node (i2) at (0.96,0.26) {};
        \node (i3) at (0.96,-0.26) {};
        \node (i4) at (0.7,-0.7) {};
        \node (i5) at (0.26,-0.96) {};
        \node (i6) at (-0.26,-0.96) {};
        \node (i7) at (-0.7,-0.7) {};
        \node (i8) at (-0.96,-0.26) {};
        \node (i9) at (-0.96,0.26) {};
        \node (i10) at (-0.7,0.7) {};
        \node (i11) at (-0.26,0.96) {};
        \foreach \i in {0,...,10}{
            \draw[gray] (i\i) -- (i\the\numexpr\i+1\relax);
        }
        % middle
        \draw[gray] (i11) -- (i0);
        \node (m1) at (1.4,1.4) {};
        \node (m2) at (1.92,0.52) {};
        \draw[gray] (m1) -- (m2);
        \node (m3) at (-1.4,1.4) {};
        \node (m4) at (-1.92,0.52) {};
        % \draw (m3) -- (m4);
        \node (m5) at (0.52,-1.92) {};
        \node (m6) at (-0.52,-1.92) {};
        \draw[gray] (m5) -- (m6);
        % rest
        \foreach \i in {0,...,11}{
            \draw[gray] (o\i) -- (i\i);
        }
        % swaps
        \draw[ultra thick] (o9) -- (m4);
        \draw[ultra thick] (m3) -- (m4);
        \draw[ultra thick] (i9) -- (m4);
        \node (v1) at (-2.6,0.2) {};
        \draw[dashed] (v1) -- (o9);
        \draw[dashed] (v1) -- (m3);
        \draw[dashed] (v1) -- (i9);
        \draw[ultra thick] (o6) -- (o7);
        \draw[ultra thick] (o6) -- (o5);
        \draw[ultra thick] (o6) -- (m6);
        \node (v2) at (0,-2.6) {};
        \draw[dashed] (v2) -- (o7);
        \draw[dashed] (v2) -- (o5);
        \draw[dashed] (v2) -- (m6);
        \draw[ultra thick] (i11) -- (i0);
        \draw[ultra thick] (i0) -- (i1);
        \draw[ultra thick] (o0) -- (i0);
        \node (v3) at (0,2) {};
        \draw[dashed] (v3) -- (o0);
        \draw[dashed] (v3) -- (i11);
        \draw[dashed] (v3) -- (i1);
        \draw[ultra thick] (o2) -- (o3);
        \draw[ultra thick] (o3) -- (o4);
        \draw[ultra thick] (o3) -- (i3);
        \node (v4) at (2.5,0) {};
        \draw[dashed] (v4) -- (o2);
        \draw[dashed] (v4) -- (o4);
        \draw[dashed] (v4) -- (i3);
        \node (m1) at (1.4,1.4) {};
        \node (m2) at (1.92,0.52) {};
        \node (m3) at (-1.4,1.4) {};
        \node (m4) at (-1.92,0.52) {};
        \node (m5) at (0.52,-1.92) {};
        \node (m6) at (-0.52,-1.92) {};
    \end{tikzpicture}
    \caption{By Lemma \ref{lem:embed}, finding a zero-sum copy of the graph on the left in a $\bZ_5$-coloring reduces to finding a subgraph like the graph on the right. In every thick--dashed pair of stars, the stars must have distinct edge sums. The edge sum of the gray edges can be any value.}
    \label{fig:1}
\end{figure}

If we find an appropriate set of switches in a ``host" graph, then we can find copies of the ``target" graph with as many different edge sums as the number of elements in the sum set of the corresponding additive sets. In particular, if the corresponding sum set is the whole group $\bZ_p$, then one of the copies is guaranteed to be zero-sum. See Figure \ref{fig:1} for an example. The following lemma makes this precise.
\begin{lem}
    Let $p$ be prime. Suppose $G$ is an $n$-vertex graph with a $2$-packing $S=\set{v_1,\ldots,v_{l}}$. Let $H=K_{n+p-1}$ and $f:E(H)\to \bZ_p$. If $H$ contains a family of pairwise disjoint switches $\set{(S_i,T_i)}_{i=1}^{l}$ such that $\abs{T_i}=\deg(v_i)$ for all $i$ and $\sum_{i=1}^{l}\abs{S_i}=p+l-1$, then $H$ contains a zero-sum copy of $G$.
    \label{lem:embed}
\end{lem}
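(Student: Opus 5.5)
We build a family of copies of $G$ in $H$ that differ only in the choice of images of the $2$-packing vertices $v_1,\ldots,v_l$. For each $i$, map $N(v_i)$ bijectively onto $T_i$; this is possible since $\abs{T_i}=\deg(v_i)$. The image of $v_i$ will be chosen from $S_i$. The vertices of $V(G)\setminus\bigcup_i N[v_i]$ are mapped injectively into the remaining vertices of $H$, namely $V(H)\setminus\bigcup_i(S_i\cup T_i)$.

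We first check there is room. The sets $N[v_i]$ are pairwise disjoint because $S$ is a $2$-packing, so $\bigcup_i N[v_i]$ has $l+\sum_i\deg(v_i)$ vertices. The number of vertices of $H$ left over is
\[
(n+p-1)-\sum_i\abs{S_i}-\sum_i\abs{T_i}=(n+p-1)-(p+l-1)-\sum_i\deg(v_i)=n-l-\sum_i\deg(v_i),
\]
which is exactly the number of remaining vertices of $G$. Since the switches are pairwise disjoint, the map is injective. Fix this map $\phi$ on $V(G)\setminus S$.

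For each choice $(s_1,\ldots,s_l)\in S_1\times\cdots\times S_l$, extend $\phi$ by $v_i\mapsto s_i$. The image is a copy of $G$ inside the complete graph $H$. Every edge of $G$ not incident to $S$ has its image fixed independently of the choices; let their total color be $\sigma$. Because $S$ is a $2$-packing, it is an independent set, so each edge incident to $S$ has the form $v_iu$ with $u\in N(v_i)$, and distinct $i$ give disjoint edge sets. The edges at $v_i$ map to the star from $s_i$ to $T_i$, with color sum $\sum_{t\in T_i}f(s_it)$. So the copy has total weight $\sigma+a_1+\cdots+a_l$, where $a_i$ ranges over
\[
A_i=\set{\sum_{t\in T_i}f(st):s\in S_i}.
\]
The switch condition gives $\abs{A_i}=\abs{S_i}$.

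By Theorem~\ref{thm:CD},
\[
\abs{A_1+\cdots+A_l}\geq\min\set{\sum_i\abs{S_i}-l+1,\ p}=\min\set{p,p}=p.
\]
Hence $A_1+\cdots+A_l=\bZ_p$, so some choice gives $a_1+\cdots+a_l=-\sigma$, and that copy is zero-sum.

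The delicate point is ensuring that the edges incident to $S$ decompose exactly into these independent stars. This relies on $S$ being independent, which rules out edges between two $v_i$'s, and on the $N[v_i]$ being disjoint, so that the stars use disjoint leaf sets $T_i$. Both follow from the $2$-packing hypothesis. No isolated-vertex or degree condition is needed here; in particular $\deg(v_i)=0$ is excluded only implicitly by $\abs{T_i}\geq 1$.
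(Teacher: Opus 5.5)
Your proof is correct and follows the paper's argument essentially step for step. You fix a bijection $N(v_i)\to T_i$, embed the rest of $G$ into the leftover vertices (there is exactly enough room), and apply Cauchy--Davenport to the star-sum sets $A_i$, which satisfy $\abs{A_i}=\abs{S_i}$ and $\sum_i\abs{A_i}-l+1=p$, so some choice of images for the $v_i$ gives total $-\sigma$. Your explicit vertex count and your observation that $S$ is independent are details the paper leaves implicit.
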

\begin{proof}
    Assume that $H$ contains such a family of switches $\set{(S_i,T_i)}_{i=1}^{l}$. For each $i$, let $A_i=\set{\sum_{t\in T_i}f(st):s\in S_i}$. We describe how to embed a zero-sum copy of $G$. For each $i$, map $N(v_i)$ injectively into $T_i$, but otherwise arbitrarily. Embed $G\setminus \pars{S\cup\bigcup_{i=1}^{l}N(v_i)}$ in $H\setminus\bigcup_{i=1}^{l} (S_i\cup T_i)$ arbitrarily; there are exactly enough vertices to do so. Let $c$ denote the edge sum of the resulting copy of $G\setminus S$. We may complete an embedding of $G$ by choosing for each $v_i\in S$ an image in $S_i$. Note that $|A_i|=|S_i|$ by definition of switches. Thus, Theorem \ref{thm:CD} yields
    \begin{equation*}
        \abs{A_1+\cdots + A_l}\geq \min\set{p,\ \sum_{i=1}^{l}\abs{S_i}-l+1}=p.
    \end{equation*}
    In particular, it is possible to choose images for the $v_i$ so that the resulting copy of the bipartite graph with parts $S$ and $\bigcup_{i=1}^lN(v_i)$ has edge sum $-c$. As a result, the total edge sum of the copy of $G$ is $0$, as desired.
\end{proof}

Monochromatic copies of the ``target" graphs that we consider (graphs with size divisible by the number of colors) are zero-sum. Hence, the next lemma will allow us to always assume that our $\bZ_p$-colored ``host" graphs contain small switches which are sufficient for embedding vertices of degree 1.
\begin{lem}
    Let $k\geq 2$ be an integer. Let $G$ be an $n$-vertex graph with $\delta(G)\geq 1$ which contains a $2$-packing $S=\set{v_1,\ldots,v_{k-1}}$. Then any $k$-coloring of $K_{n+k-1}$ contains a monochromatic copy of $G$ or at least $k-1$ disjoint, non-monochromatic copies of $P_3$.
    \label{lem:p3s}
\end{lem}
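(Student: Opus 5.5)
We argue greedily: repeatedly extract non-monochromatic copies of $P_3$ from the host graph, and if the process stops early, the leftover structure is forced to be monochromatic and large enough to contain $G$. Fix a $k$-coloring of $H=K_{n+k-1}$ and choose a maximum family $\mathcal{P}$ of pairwise vertex-disjoint non-monochromatic copies of $P_3$ in $H$. If $|\mathcal{P}|\ge k-1$ we are done, so assume $|\mathcal{P}|=m\le k-2$ and let $W=V(H)\setminus V(\mathcal{P})$, so that $|W|=n+k-1-3m$.

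The first observation is that every vertex $w\in W$ is monochromatic in $W$: if two edges $wx,wy$ with $x,y\in W$ had different colors, then $xwy$ would be a non-monochromatic $P_3$ disjoint from $\mathcal{P}$, contradicting maximality. Hence if $|W|\ge 3$, any two edges of $H[W]$ are joined by a path in the connected graph $H[W]$. Consecutive edges share a vertex and so have the same color, which makes $H[W]$ monochromatic, say in color $1$. If $m=0$, then $H[W]=K_{n+k-1}$ is a monochromatic clique containing $G$. In general $|W|=n+k-1-3m$ may be smaller than $n$, so the plan is to use the $2$-packing to embed $G$ into $W$ together with a few vertices of the paths.

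For the embedding, map $G\setminus\{v_1,\dots,v_m\}$ (only $m$ packing vertices removed) into $W$. This needs $n-m\le n+k-1-3m$, i.e.\ $2m\le k-1$, which is not always true. I would therefore refine the argument using maximality more strongly via an exchange argument. Take a path $P=xyz\in\mathcal{P}$ and a vertex $w\in W$. If an edge from $w$ to $P$ had a color different from $1$, I would try to re-route $P$ together with vertices of $W$ into two disjoint non-monochromatic $P_3$'s, increasing $|\mathcal{P}|$. Since $|W|$ is large, there are plenty of spare $W$-vertices to pair with. For instance, if $wx$ has color $\neq 1$, then $x\,w\,w'$ is non-monochromatic for any $w'\in W$, and $P$ loses only $x$. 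Recovering a second path from $y,z$ and other $W$ vertices then requires $yz$ or the edges from $y,z$ into $W$ to differ from $1$. Pushing this through, I expect to show that for each path $P$, at least two of its vertices send only color-$1$ edges to $W$ (more precisely, all edges between those vertices and $W$ have color $1$). Each such vertex can then host a packing vertex $v_i$, since $N(v_i)$ is mapped into $W$ and every edge incident to $v_i$ has color $1$. The $2$-packing condition guarantees that the neighborhoods $N[v_i]$ are disjoint and that no edges join different $v_i$, so edges among the hosting vertices are never used.

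With this, $m$ of the packing vertices are placed on path vertices (one per path suffices) and the remaining $n-m$ vertices of $G$ go into $W$. This requires $n-m\le n+k-1-3m$, i.e.\ $m\le (k-1)/2$. If two vertices per path are usable, we instead place $2m$ vertices outside $W$, and independence of $S$ lets any $v_i$ go on any such vertex, provided $|S|=k-1\ge 2m$, i.e.\ $m\le (k-1)/2$ again. That bound is still insufficient for larger $m$, so we need all three vertices of each path usable, or one of the extra vertices placed on $W$.

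The main obstacle is this exchange step: precisely characterizing, via maximality of $\mathcal{P}$, which path vertices have all edges to $W$ in color $1$, and making the counting close to $|W|+(\text{usable path vertices})\ge n$ with at most $k-1$ packing vertices placed outside $W$. The hypothesis $\delta(G)\ge1$ is used to ensure every $v_i$ has a neighbor in $W$, so that the exchanges and the color-$1$ forcing are actually relevant.
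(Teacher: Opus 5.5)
Your proposal has a genuine gap: it stops where the actual proof begins. The first step matches the paper: a maximum family $\mathcal{P}$ of $m\le k-2$ disjoint non-monochromatic paths leaves a clique $W$ that is monochromatic in some color $x$. Everything after that is stated as an expectation, and the embedding plan cannot work in general. Since $\abs{W}=n+k-1-3m$, at least $3m-k+1$ vertices of $G$ must go on path vertices. Once $m>2(k-1)/3$ (possible for $k\ge 5$), this exceeds the $k-1$ packing vertices, no matter how many path vertices are usable.

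The missing idea is to place on two vertices of the $i$-th path both $v_i$ \emph{and one of its neighbors}; this is where $\delta(G)\ge 1$ enters. Then $2m$ vertices of $G$ lie on paths, and the remaining $n-2m\le n+k-2-3m=\abs{W}-1$ fit in $W$. This needs color conditions your claim does not provide. The edge joining the two hosts on a path must have color $x$. The neighbor-host must send color $x$ to all of $W$ and to the neighbor-hosts on the other paths, since neighbors of distinct $v_i$ may be adjacent in $G$. By contrast, the host of $v_i$ only needs color $x$ toward the images of $N(v_i)$, because the $2$-packing condition rules out edges from $v_i$ to $N[v_j]$ for $j\neq i$.

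Moreover, the claim you hope to prove --- that two vertices of every path send only color $x$ into $W$ --- is false for a given maximum family. Take one path $abc$ with $f(ab)=f(ac)=x\neq f(bc)$, one vertex $u\in W$ with $f(bu)\neq x$ and $f(cu)\neq x$, and all other edges of color $x$. Every non-$x$ edge lies inside $\set{b,c,u}$. So no two disjoint non-monochromatic copies of $P_3$ exist, and $\set{abc}$ is maximum, yet only $a$ sends only color $x$ into $W$.

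What the exchange arguments actually give (after normalizing so that $f(a_ib_i)=x$, and using $\abs{W}\ge 3$, which follows from $n\ge 2(k-1)$) is the following. First, $a_i$ sends only color $x$ to $W$. Second, $f(a_ia_j)=x$ for $i\neq j$. Third, some $b_i'\in\set{b_i,c_i}$ with $f(a_ib_i')=x$ sends color $x$ to all but at most one vertex $u_i\in W$. The paper then maps $v_i$ to $b_i'$ and a neighbor of $v_i$ to $a_i$. It neutralizes each exceptional $u_i$ by mapping onto it either a neighbor of some other $v_j$ or the unused packing vertex $v_{k-1}$, neither of which is adjacent to $v_i$. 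Each of these exchange steps has to be carried out explicitly; your proposal names the strategy but not the argument.
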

\begin{proof}
    The statement is trivial for $k=2$, so assume that $k\geq 3$. Let $H=K_{n+k-1}$ and let $f:E(H)\to \set{1,\ldots,k}$. Let $\set{a_ib_ic_i}_{i=1}^{l}$ be a collection of disjoint, non-monochromatic copies of $P_3$ in $H$ of maximum size. Assume towards a contradiction that $l\leq k-2$. Observe that $K=H\setminus\bigcup_{i=1}^{l}\set{a_i,b_i,c_i}$ is a monochromatic clique on $n+k-1-3l$ vertices; denote its color by $x$. We show that $K$ is contained in a larger monochromatic subgraph of $H$.
    
    Note that we may assume $f(a_ib_i)=x$ for all $i$. If instead both $f(a_ib_i)\neq x$ and $f(b_ic_i)\neq x$ for some $i$, then we may replace $a_ib_ic_i$ with another non-monochromatic path containing an edge of color $x$ as follows. Consider any distinct vertices $u_1,u_2,u_3\in V(K)$, which must exist since $n\geq 2(k-1)$ and therefore $\abs{V(K)}\geq 3$. 
    Since we must have either $f(a_ib_i)\neq f(b_iu_2)$ or $f(b_ic_i)\neq f(b_iu_2)$, we may assume without loss of generality that $f(a_ib_i)\neq f(b_iu_2)$. 
    It must be the case that at least one of $f(a_iu_1)$, $f(b_iu_2)$, or $f(c_iu_3)$ is equal to $x$, as otherwise it would be possible to obtain a larger collection of non-monochromatic paths by replacing $a_ib_ic_i$ with $a_ib_iu_2$ and $u_1u_3c_i$. Thus, we may replace the path $a_ib_ic_i$ with $u_1a_ib_i$, $u_2b_ia_i$, or $u_3c_ib_i$, respectively, increasing the number of paths in the collection with an edge of color $x$ while preserving the property that the clique outside this collection is still monochromatic in color $x$.

    Observe now that $f(a_iu)=x$ for all $i$ and all $u\in K$. Indeed, if $f(a_iu_1)\neq x$ for some $i$ and $u_1\in K$, then we may again find a larger collection of non-monochromatic paths by considering another pair of vertices $u_2,u_3\in V(K)\backslash\{u_1\}$. In particular, if $f(c_iu_3)=x$, replacing $a_ib_ic_i$ with $u_2u_1a_i$ and $u_3c_ib_i$ yields a larger collection of paths. If $f(c_iu_3)\neq x$, replacing $a_ib_ic_i$ with $b_ia_iu_1$ and $u_2u_3c_i$ yields the same contradiction. 

    Now, we may assume for all $i$ that either (i) $f(a_ic_i)=x$ and $f(c_iu)=x$ for all $u\in K$, or (ii) $f(b_iu)=x$ for all but possibly one vertex in $V(K)$. If not, then there is some $i$ such that both (i) and (ii) fail. If $f(c_iu)=x$ for all $u\in K$ but $f(a_ic_i)\neq x$, then we can replace $a_ib_ic_i$ in our collection of non-monochromatic paths with $b_ia_ic_i$, resulting in a path which satisfies (ii).  Otherwise, there exist distinct $u_1,u_2\in V(K)$ such that $f(c_iu_1)\neq x$ and $f(b_iu_2)\neq x$. A contradiction follows, since choosing $u_3\in V(K)$ distinct from $u_1,u_2$ and replacing $a_ib_ic_i$ with $a_ib_iu_2$ and $u_3u_1c_i$ yields a larger collection of paths. 

    Next, observe that $f(a_ia_j)=x$ for all $i\neq j$. If instead $f(a_ia_j)\neq x$ for some $i\neq j$, then we can find a larger collection of non-monochromatic paths chosen based on whether $a_ib_ic_i$ and $a_jb_jc_j$ satisfy (i) or (ii) above. If $a_ib_ic_i$ and $a_jb_jc_j$ both satisfy (i) above, then for any distinct $u_1,u_2,u_3\in V(K)$, we can use $u_1c_ib_i$, $u_2c_jb_j$, and  $u_3a_ia_j$. If $a_ib_ic_i$ satisfies (i) and $a_jb_jc_j$ satisfies (ii), then we can pick distinct $u_1,u_2,u_3\in V(K)$ so that $f(u_2b_j)=x$ and use $u_1c_ib_i$, $u_2b_jc_j$, and $u_3a_ia_j$. And if both satisfy (ii), then we can pick distinct $u_1,u_2,u_3\in V(K)$ so that $f(u_1b_i)=f(u_2b_j)=x$ and use $u_1b_ic_i$, $u_2b_jc_j$, and $u_3a_ia_j$.
    
    We now describe how to embed a monochromatic copy of $G$ using all of the $a_i$, one of $b_i$ or $c_i$ for each $i$, and all but one vertex in $V(K)$. Indeed, using at least two vertices from each path will be enough since $\abs{V(H)}-l\geq n+1$. Let $b_i'=c_i$ if $a_ib_ic_i$ satisfies (i) above. Otherwise let $b_i'=b_i$, and if it exists, let $u_i$ denote the vertex in $K$ such that $f(b_i'u_i)\neq x$. Note that the $u_i$ are not necessarily distinct. Let $v_1,\ldots,v_{k-1}$ denote the vertices in $S$. For $1\leq i\leq l$, map $v_i$ and one of its neighbors to $b_i'$ and $a_i$, respectively. While there exist $i\neq j$ such that $\deg(v_i),\deg(v_j)\geq 2$ and $u_i$ and $u_j$ exist and are distinct, map an unmapped neighbor of $v_i$ to $u_j$ and an unmapped neighbor of $v_j$ to $u_i$. Afterward, if there exists $i$ such that $\deg(v_i)\geq 2$, $u_i$ exists, and $u_i$ has not yet been assigned a preimage, map $v_{k-1}$ to $u_i$. Embed the rest of $G$ using vertices in $V(K)$, but otherwise arbitrarily. Because the $N[v_i]$ are disjoint, no edge of the form $b_i'u_i$ or $a_ib_j'$ or $b_i'b_j'$ for $i\neq j$ will be used. By our previous observations, the embedding of $G$ is monochromatic in color $x$.
\end{proof}
The next lemma allows us to compute edge sums easily for a specific family of $\bZ_k$-colorings when $k$ is odd.

\begin{lem}
    Let $k\geq 3$ be odd. Suppose $K$ is a $\bZ_k$-colored complete graph and there exists a partition of $V(K)$ into sets $V_1,\ldots,V_m$ such that $\abs{V_i}\geq 2$ for each $i$, each clique $K[V_i]$ is monochromatic in some color $c_i$, and the edges between cliques $K[V_i]$ and $K[V_j]$ are colored with $\overline{2}^{-1}(c_i+c_j)$. Then for any subgraph $G$ of $K$, its edge sum is given by $\overline{2}^{-1}\sum_{i=1}^{m}c_i\sum_{v\in V(G)\cap V_i}\deg_G(v)$.
    \label{lem:cliquecoloringsum}
\end{lem}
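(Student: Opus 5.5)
We compute the edge sum of $G$ edge by edge and then regroup the result by vertices. The plan has three steps.

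First, classify the edges. For $v\in V(K)$, let $i(v)$ denote the unique index with $v\in V_{i(v)}$. Consider an edge $uv$ of $G$. If $i(u)=i(v)=i$, the edge lies inside the clique $K[V_i]$, so $f(uv)=c_i$. Since $k$ is odd, $\overline{2}$ is invertible in $\bZ_k$, and we may write $c_i=\overline{2}^{-1}(c_i+c_i)$. If $i(u)\neq i(v)$, the hypothesis gives $f(uv)=\overline{2}^{-1}(c_{i(u)}+c_{i(v)})$ directly. So in both cases
\begin{equation*}
    f(uv)=\overline{2}^{-1}\pars{c_{i(u)}+c_{i(v)}}.
\end{equation*}

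Second, sum and regroup. Summing this identity over all $uv\in E(G)$ gives
\begin{equation*}
    \sum_{uv\in E(G)}f(uv)=\overline{2}^{-1}\sum_{uv\in E(G)}\pars{c_{i(u)}+c_{i(v)}}.
\end{equation*}
We now count contributions by vertex. Each vertex $v\in V(G)$ appears as an endpoint of exactly $\deg_G(v)$ edges of $G$, and each such edge contributes $c_{i(v)}$ on its behalf. This is the handshake-style regrouping, and it yields
\begin{equation*}
    \overline{2}^{-1}\sum_{v\in V(G)}c_{i(v)}\deg_G(v).
\end{equation*}

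Third, split by parts. Partitioning $V(G)$ according to the sets $V_i$ gives
\begin{equation*}
    \overline{2}^{-1}\sum_{i=1}^{m}c_i\sum_{v\in V(G)\cap V_i}\deg_G(v),
\end{equation*}
which is the claimed formula.

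There is no real obstacle here. The only point to verify is that the within-clique color $c_i$ has the same form $\overline{2}^{-1}(c_i+c_j)$ as the between-clique colors, which holds precisely because $\overline{2}$ is invertible modulo the odd integer $k$. The hypothesis $\abs{V_i}\geq 2$ is not needed for this computation.
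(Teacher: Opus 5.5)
Your proof is correct and is essentially the paper's double-counting argument. The paper treats within-clique edges via $\abs{E(G)\cap K[V_i]}=\frac{1}{2}\sum_{v\in V(G)\cap V_i}\abs{N_G(v)\cap V_i}$ and splits each between-clique color $\overline{2}^{-1}(c_i+c_j)$ over the two endpoints, whereas you first note that within-clique edges also have color $\overline{2}^{-1}(c_i+c_i)$ and then do a single handshake regrouping — a slightly cleaner packaging of the same computation (and you are right that $\abs{V_i}\geq 2$ is not needed for it).
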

\begin{proof}
    For any subgraph $G$ of $K$, compute its edge sum as follows:
    \begin{align*}
        &\sum_{i=1}^{m}c_i\abs{E(G)\cap K[V_i]}+\sum_{i< j}\overline{2}^{-1}(c_i+c_j)\sum_{v\in V(G)\cap V_i}\abs{N_G(v)\cap V_j}\\
        &= \sum_{i=1}^{m}c_i\pars{\frac{1}{2}\sum_{v\in V(G)\cap V_i}\abs{N_G(v)\cap V_i}} + \sum_{i< j}\overline{2}^{-1}\pars{c_i\sum_{v\in V(G)\cap V_i}\abs{N_G(v)\cap V_j}+c_j\sum_{v\in V(G)\cap V_j}\abs{N_G(v)\cap V_i}}\\
        &= \overline{2}^{-1}\sum_{i=1}^{m}c_i\sum_{v\in V(G)\cap V_i}\deg_G(v).
    \end{align*}
\end{proof}
\section{Proof of Theorem \ref{thm:main}}
\begin{proof}
    We will begin by proving the upper bound.  Set $C=p-1+c$ if $s_0=0$ and $C=5p-9+s_0+c$ if $s_0\neq 0$. Let $H=K_{n+C}$ and consider any $\bZ_p$-coloring $f:E(H)\to \bZ_p$. 

    Note that if $\deg(v_i)=1$ for all $v\in S$, then the upper bound follows from Lemma \ref{lem:p3s} and Lemma \ref{lem:embed}, because together they imply that $H$ contains a zero-sum copy of $G$. Thus, we may assume we are not in this case.
    
    Suppose $S=\set{v_1,\ldots,v_{p-1}}$ such that $\deg(v_i)\not\equiv 0\mmod{p}$ for $1\leq i\leq p-1-s_0$. Say that a cycle $abcd$ in $H$ ``good" if $f(ab)-f(bc)\neq f(ad)-f(dc)$. 
    
    Assume first that $H$ contains pairwise disjoint non-monochromatic paths $\set{a_ib_ic_i}_{i=1}^{p-1-s_0}$ and good cycles $\set{a_ib_ic_id_i}_{i=p-s_0}^{p-1}$. (If $s_0=0$, then there are $p-1$ paths.) Choose pairwise disjoint subsets $\set{U_i}_{i=1}^{p-1}$ of $V(H)\setminus\pars{\bigcup_{i=1}^{p-1}\set{a_i,b_i,c_i}\cup\bigcup_{i=p-s_0}^{p-1}\set{d_i}}$ so that $\abs{U_i}=\deg(v_i)-1$ if $1\leq i\leq p-1-s_0$ and $\abs{U_i}=\deg(v_i)-2$ if $p-s_0\leq i\leq p-1$. Choose $u_0\in V(H)\setminus\pars{\bigcup_{i=1}^{p-1}\set{a_i,b_i,c_i}\cup\bigcup_{i=p-s_0}^{p-1}\set{d_i}}$ so that $u_0\notin U_i$ for all $i$. There are enough vertices to do so because
    \begin{align*}
        3(p-1-s_0)+4s_0+\sum_{i=1}^{p-1}\abs{U_i} &= 3(p-1)+\sum_{i=1}^{p-1}(\deg(v_i)-1)\\
        &=\sum_{i=1}^{p-1}(\deg(v_i)+1)+p-1\\
        &= \abs{\bigcup_{v\in S}N[v]}+p-1\\
        &< \abs{V(H)}.
    \end{align*}
    
    We describe how to find a family of pairwise disjoint switches $\set{(S_i,T_i)}_{i=1}^{p-1}$ such that $\abs{S_i}=2$ and $\abs{T_i}=\deg(v_i)$; then the upper bound will follow by Lemma \ref{lem:embed}. For each $i$ such that $\deg(v_i)=1$, let $S_i=\set{a_i,c_i}$ and $T_i=\set{b_i}$. We will consider two cases for $i$ such that $\deg(v_i)\geq 2$. See Figure \ref{fig:2} for an example.
    
    First, let $1\leq i\leq p-1-s_0$ (hence $\deg(v_i)\not\equiv 0\mmod{p}$ and $|U_i|=\deg(v_i)-1$). If $\sum_{u\in U_i\cup \set{u_0}}f(a_iu)\neq \sum_{u\in U_i\cup\set{u_0}}f(c_iu)$, then let $S_i=\set{a_i,c_i}$, $T_i=U_i\cup\set{u_0}$, and redefine $u_0=b_i$. Otherwise, choose $u'\in U_i\cup \set{u_0}$ such that
        \begin{equation*}
            \sum_{u\in \pars{U_i\cup\set{u_0}}\setminus\set{u'}}f(a_iu)+f(a_ib_i)\neq \sum_{u\in \pars{U_i\cup\set{u_0}}\setminus\set{u'}}f(c_iu)+f(c_ib_i),
        \end{equation*}
        and let $S_i=\set{a_i,c_i}$, $T_i=\pars{U_i\cup\set{u_0,b_i}}\setminus\set{u'}$, and redefine $u_0=u'$. At least one such $u'$ exists; if instead equality holds in the above for all $u'\in U_i\cup\set{u_0}$, then summing all of those equalities and adding $\sum_{u\in U_i\cup\set{u_0}}f(a_iu)=\sum_{u\in U_i\cup\set{u_0}}f(c_iu)$ to both sides of the resulting equation gives
        \begin{equation*}
            \deg(v_i)\sum_{u\in U_i\cup \set{u_0}}f(a_iu)+\deg(v_i)f(a_ib_i) = \deg(v_i)\sum_{u\in U_i\cup \set{u_0}}f(c_iu)+\deg(v_i)f(c_ib_i).
        \end{equation*}
        Then cancellations yield the contradiction $f(a_ib_i)=f(c_ib_i)$.
     
    Now let $p-s_0\leq i\leq p-1$  (hence $\deg(v_i)\equiv 0\mmod{p}$ and $|U_i|=\deg(v_i)-2$).  Then at least one of
        \begin{equation*}
            f(a_ib_i)+\sum_{u\in U_i\cup{\set{u_0}}}f(a_iu)\neq f(c_ib_i)+\sum_{u\in U_i\cup{\set{u_0}}}f(c_iu)
        \end{equation*}
        or
        \begin{equation*}
            f(a_id_i)+\sum_{u\in U_i\cup{\set{u_0}}}f(a_iu)\neq f(c_id_i)+\sum_{u\in U_i\cup{\set{u_0}}}f(c_iu)
        \end{equation*}
        is true, because both cannot be equalities while $f(a_ib_i)-f(c_ib_i)\neq f(a_id_i)-f(c_id_i)$. So, let $S_i=\set{a_i,c_i}$, $T_i=U_i\cup\set{u_0,b_i}$, and redefine $u_0=d_i$, or let $S_i=\set{a_i,c_i}$, $T_i=U_i\cup\set{u_0,d_i}$, and redefine $u_0=b_i$, depending on which of the above inequalities hold.
    \begin{figure}[h]
        \centering
        \begin{tikzpicture}[scale=1.2, every node/.style={circle, draw, fill=white}, minimum size=5mm, inner sep=0pt]
            \node (u0) at (0,-0.3) {$u_0$};
            
            % P3 part
            \draw (-2.8,0) ellipse [x radius=1.5, y radius=1];
            \node[draw=none] at (-2.8,1.5) {$U_1$};
            
            \node (a1) at (-3.5,-1.7) {$a_1$};
            \node (b1) at (-2.6,-2.5) {$b_1$};
            \node (c1) at (-2.3,-1.7) {$c_1$};
            
            \node[black, scale=0.5] (u11) at (-3.7,0.2) {};
            \node[black, scale=0.5] (u12) at (-3.2,0.5) {};
            \node[black, scale=0.5] (u13) at (-2.6,0.6) {};
            \node[black, scale=0.5] (u14) at (-2,0.4) {};
            
            \draw[ultra thick] (a1) -- (b1);
            \draw[dashed] (b1) -- (c1);
            \draw[ultra thick] (a1) -- (u11);
            \draw[ultra thick] (a1) -- (u12);
            \draw[ultra thick] (a1) -- (u13);
            \draw[ultra thick] (a1) -- (u14);
            \draw[dashed] (u11) -- (c1);
            \draw[dashed] (u12) -- (c1);
            \draw[dashed] (u13) -- (c1);
            \draw[dashed] (u14) -- (c1);
            \draw[gray] (a1) -- (u0);
            \draw[gray] (c1) -- (u0);
            
            %C4 part
            \draw (2.8,0) ellipse [x radius=1.5, y radius=1];
            \node[draw=none] at (2.8,1.5) {$U_{2}$};
            
            \node (a2) at (2.3,-1.7) {$a_{2}$};
            \node (d2) at (3.2,-2.6) {$d_{2}$};
            \node (c2) at (3.5,-1.7) {$c_{2}$};
            \node (b2) at (2,-2.6) {$b_{2}$};
            
            \draw[ultra thick] (a2) -- (d2);
            \draw[dashed] (d2) -- (c2);
            
            \node[black, scale=0.5] (u21) at (3.7,0.2) {};
            \node[black, scale=0.5] (u22) at (3.2,0.5) {};
            \node[black, scale=0.5] (u23) at (2.6,0.6) {};
            \node[black, scale=0.5] (u24) at (2,0.4) {};
            
            \draw[gray] (a2) -- (b2);
            \draw[gray] (b2) -- (c2);
            \draw[ultra thick] (a2) -- (u21);
            \draw[ultra thick] (a2) -- (u22);
            \draw[ultra thick] (a2) -- (u23);
            \draw[dashed] (u21) -- (c2);
            \draw[dashed] (u22) -- (c2);
            \draw[dashed] (u23) -- (c2);
            \draw[ultra thick] (a2) -- (u0);
            \draw[dashed] (c2) -- (u0);
            \draw[dashed] (c2) -- (u24);
            \draw[ultra thick] (a2) -- (u24);
        \end{tikzpicture}
        \caption{An example of switches that may arise in the proof of Theorem \ref{thm:main} for $p=3$ and $s_0=c=1$. In every thick--dashed pair of stars, the stars have distinct edge sums. The gray edges are not included in a switch.}
        \label{fig:2}
    \end{figure}

    Recall that by Lemma \ref{lem:p3s}, we may assume there are enough non-monochromatic paths. Above we showed if there are enough good cycles as well, then we can find the necessary switches and be done by Lemma \ref{lem:embed}. Thus, we now assume we do not have the sufficient number $s_0$ of good cycles for this approach.
    
    So, it only remains to prove the result when $s_0\geq 1$ and the number of good cycles is at most $s_0-1$. In this case, there exists a clique $K\subseteq H$ such that \[\abs{V(K)}\geq n+C-3(p-1-s_0)-4(s_0-1)\geq n+2p-2\] and $K$ does not contain any good cycles. Our goal is to show there exists a subgraph of $K$ for which the $\bZ_p$-coloring is as described in the hypothesis of Lemma \ref{lem:cliquecoloringsum}.
    
    First, choose any $v\in V(K)$; its incident edges in $K$ are colored with $m\leq p$ distinct colors. 
    Partition $V(K)\setminus\{v\}$ into $V_1,\ldots, V_m$ so that $f(vu_1)=f(vu_2)$ for $u_1\in V_i$ and $u_2\in V_j$ if and only if $i=j$. We will assume $|V_i|\geq |V_j|$ for all $i<j$.  Note that each clique $K[V_i]$ is either a single vertex or monochromatic in some color $c_i$, and  the edges between cliques $K[V_i]$ and $K[V_j]$ are colored with a single color $c_{ij}$, since there can be no good cycles in $K$.    
    
    Now, update $K$ by removing $v$ and any singleton $V_i$, and assume the remaining sets are $V_1,\ldots,V_{m'}$. If $p-1$ of the $V_i$ are singletons, then $|V_1|\geq n+2p-3-(p-1)=n+p-2\geq n$, guaranteeing the existence of a monochromatic copy of $G$ in $K[V_1]$. Thus, we may assume at most $p-2$ of the $V_i$ are singletons and $\abs{V(K)}\geq n+2p-3-(p-2)=n+p-1$. 

    We address the case $p=2$ first. Note that it cannot be the case that $m=2$ and $c_1\neq c_2$, since any copy of $C_4$ with three edges in one color and one edge in another color must be good. Moreover, we are done if $m=1$ or both $m=2$ and $c_1=c_2=c_{12}$, since then any copy of $G$ in $K$ is monochromatic and therefore zero-sum. Thus the only case that remains to be checked is $m=2$ and $c_1=c_2\neq c_{12}$. We may assume at this point that all vertex degrees in $G$ are even. If not, we could have found a $2$-packing $S$ of $G$ of size $2-1=1$ such that $s_0=0$, which is a case we already resolved. Hence every cut of $G$ is of even size. But embeddings of $G$ in $K$ correspond to cuts of $G$, and since $e(G)$ is even, it follows that they are all zero-sum.
    
    Now assume $p\neq 2$. We finally argue, for all $i\neq j$, that $c_{ij}=\overline{2}^{-1}(c_i+c_j)$. Choose distinct $u_1,u_2\in V_i$ and distinct $u_1',u_2'\in V_j$. Since $u_1u_2u_2'u_1'$ cannot be a good cycle, we must have $f(u_1u_2)-f(u_1u'_1)=f(u_2u_2')-f(u_2'u_1')$. Note that $f(u_1u_2)=c_i$ and $f(u'_1u'_2)=c_j$ and $f(u_1u'_1)=f(u_2u'_2)=c_{ij}$. Substitution yields $c_i-c_{ij}=c_{ij}-c_j$, and solving for $c_{ij}$ yields the claim. 
    
    Let $V(G)=\set{a_1,\ldots,a_n}$. By Lemma \ref{lem:cliquecoloringsum}, $K$ contains a zero-sum copy of $G$ if there exists a rearranged subsequence $b_{j_1},\ldots,b_{j_{n}}$ of $\underbrace{c_1,\dots,c_1}_{\abs{V_1} \text{ times}},\underbrace{c_2,\dots,c_2}_{\abs{V_2} \text{ times}},\dots,\underbrace{c_{m'},\dots,c_{m'}}_{\abs{V_{m'}} \text{ times}}$ such that $\sum_{t=1}^{n}b_{j_t}\deg(a_t)\equiv 0\mmod{p}$. But that is guaranteed by Theorem \ref{thm:weightedEGZ}, since $\sum_{t=1}^{n}\deg(a_t)=2e(G)\equiv 0\mmod{p}$.

%%%%%%%%%%%%%%%%%%%%%%%%%
    We conclude the proof by justifying that a matching lower bound holds when $s_0=c=0$, i.e. $C=p-1$, if there exists $d\not\equiv 0\mmod{p}$ such that $\deg(v)\equiv d\mmod{p}$ for all $v\in V(G)$. For $p=2$, consider the coloring of $K_{n}$ that sends all edges of a copy of $K_{1,n-1}$ to $\overline{1}$ and all other edges to $\overline{0}$. In this case, all vertex degrees in $G$ are assumed to be odd, so no copy of $G$ in $K_n$ is zero-sum. For $p\geq 3$, let $H=K_{n+p-2}$ and let $V_1$ and $V_2$ partition $V(H)$ such that $\abs{V_1}=n-1$ and $\abs{V_2}=p-1$. Color the edges of $H[V_1]$ with $\overline{0}$, the edges of $H[V_2]$ with $\overline{2}$, and the edges between the two cliques with $\overline{1}$. By Lemma \ref{lem:cliquecoloringsum}, the edge sum of any copy $G'$ of $G$ in $H$ is given by $\abs{V(G')\cap V_2}\overline{d}$, which is nonzero because $1\leq \abs{V(G')\cap V_2}\leq p-1$.
\end{proof}

\bibliography{citations}

@article{CH,
      title={On zero-sum {R}amsey numbers of cycles and wheels}, 
      author={Cheng Chi and Jialin He},
      year={2026},
      journal={arXiv:2605.14954}, 
}

@article{shapiro,
      title={A linear upper bound on zero-sum {R}amsey numbers of $d$-degenerate graphs in $\mathbb{Z}_p$}, 
      author={Andrey Shapiro},
      year={2026},
      journal={arXiv:2604.10864}, 
}

@article{article,
author = {Caro, Yair},
year = {1992},
month = {06},
pages = {},
title = {On zero-sum {R}amsey numbers--complete graphs},
volume = {2},
journal = {The Quarterly Journal of Mathematics},
doi = {10.1093/qmath/43.2.175}
}

@article{bialostocki1990zero,
  title={ON ZERO SUM {R}AMSEY NUMBERS--SMALL GRAPHS},
  author={Bialostocki, A and Dierker, P},
  journal={Ars Combinatoria},
  volume={29},
  pages={193--198},
  year={1990},
  publisher={CHARLES BABBAGE RES CTR PO BOX 272 ST NORBERT POSTAL STATION, WINNIPEG MB~…}
}

@incollection{ramsey1987problem,
  title = {On a problem of formal logic},
  author = {Ramsey, Frank P},
  booktitle = {Classic Papers in Combinatorics},
  pages = {1--24},
  year = {1987},
  publisher = {Springer}
}

@article{CARO1994205,
title = {A complete characterization of the zero-sum (mod 2) {R}amsey numbers},
journal = {Journal of Combinatorial Theory, Series A},
volume = {68},
number = {1},
pages = {205-211},
year = {1994},
issn = {0097-3165},
doi = {https://doi.org/10.1016/0097-3165(94)90098-1},
url = {https://www.sciencedirect.com/science/article/pii/0097316594900981},
author = {Yair Caro}
}

@article{alvarado2025problemcaromathbbz3ramseynumber,
      title={On a problem of {C}aro on $\mathbb{Z}_3$-{R}amsey number of forests}, 
      author={José D. Alvarado and Lucas Colucci and Roberto Parente},
      year={2025},
      journal={arXiv:2503.01032}
}

@article{mod3,
  title={On zero-sum {R}amsey numbers modulo 3},
  author={Caro, Yair and Mifsud, Xandru},
  journal={arXiv:2502.03864},
  year={2025}
}

@article{colucci2026linearupperboundzerosum,
      title={A linear upper bound on the zero-sum {R}amsey number of forests in $\mathbb{Z}_p$}, 
      author={Lucas Colucci and Marco D'Emidio},
      year={2026},
      journal={arXiv:2512.06229}, 
}

@article{katz2025linearupperboundzerosum,
      title={A linear upper bound for zero-sum {R}amsey numbers of bounded degree graphs}, 
      author={Jasmin Katz and Xiaopan Lian and Alexandru Malekshahian and Andrey Shapiro},
      year={2025},
      journal={arXiv:2512.17790}, 
}

@article{grynkiewicz,
author = {Grynkiewicz, David},
year = {2006},
month = {08},
pages = {445-453},
title = {A Weighted {E}rdős-{G}inzburg-{Z}iv Theorem},
volume = {26},
journal = {Combinatorica},
doi = {10.1007/s00493-006-0025-y}
}
\bibliographystyle{abbrv}
\end{document}